\title{Paintbucket on graphs is PSPACE-complete}
\author{Ethan J. Saunders and Peter Selinger\\
  Dalhousie University}
\date{}
\begin{document}

\maketitle

\begin{abstract}
  The game of Paintbucket was recently introduced by Amundsen and
  Erickson. It is played on a rectangular grid of black and white
  pixels. The players alternately fill in one of their opponent's
  connected components with their own color, until the entire board is
  just a single color. The player who makes the last move wins. It is
  not currently known whether there is a simple winning strategy for
  Paintbucket. In this paper, we consider a natural generalization of
  Paintbucket that is played on an arbitrary simple graph, and we show
  that the problem of determining the winner in a given position of
  this generalized game is PSPACE-complete.
\end{abstract}

\section{Introduction}

The game of Paintbucket was recently introduced by Amundsen and
Erickson {\cite{AE2022}}. It is played on a rectangular grid of black
and white pixels. The players alternately fill in one of their
opponent's connected components with their own color, until the entire
board is just a single color. Here, two pixels are considered to be
connected if they share a common edge. The player who makes the final
move wins. An example game of Paintbucket is shown in
Figure~\ref{fig:paintbucket-example}(a). It is not currently known
whether there is a simple winning strategy for Paintbucket.

In this paper, we consider a natural generalization of Paintbucket
that is played on an arbitrary simple graph. Consider a simple
undirected graph $G$, given by a set $V$ of vertices and a set $E$ of
two-element subsets of $V$ called edges. We initially assign a color,
black or white, to every vertex. Two black vertices are connected if
there is a path (of length zero or greater) between them that only
passes through black vertices. A \emph{black group} is a non-empty
connected component of black vertices, i.e., a maximal connected set
of black vertices. White groups are defined analogously. A move by a
player consists of picking one of the opponent's groups and flipping
the colors of all of its vertices.  The winner is again the player who
makes the last move.  Note that Paintbucket played on a square grid
graph, as in Figure~\ref{fig:paintbucket-example}(b), is exactly the
same thing as the original version of Paintbucket played on a
rectangular grid of pixels.

\begin{figure}
  \[
  \begin{tikzpicture}[scale=0.3]
    \def\offset{8cm}
    \def\yoffset{-6.5cm}
    \begin{scope}
      \path (-2,1) node {(a)};
      \begin{scope}
        \fill[black] (-0.5,-0.5) rectangle +(3,3);
        \whitesquare(0,0);
        \whitesquare(0,2);
        \whitesquare(1,1);
        \whitesquare(2,0);
        \whitesquare(2,2);
        \draw[black] (-0.5,-0.5) rectangle +(3,3);
        \draw[->] (3.5,1) -- node[above]{Black} (6.5,1);
      \end{scope}
      \begin{scope}[xshift=\offset]
        \fill[black] (-0.5,-0.5) rectangle +(3,3);
        \whitesquare(0,0);
        \whitesquare(0,2);
        \whitesquare(1,1);
        \whitesquare(2,0);
        \draw[black] (-0.5,-0.5) rectangle +(3,3);
        \draw[->] (3.5,1) -- node[above]{White} (6.5,1);
      \end{scope}
      \begin{scope}[xshift={2*\offset}]
        \fill[black] (-0.5,-0.5) rectangle +(3,3);
        \whitesquare(0,0);
        \whitesquare(0,2);
        \whitesquare(1,1);
        \whitesquare(2,0);
        \whitesquare(1,0);
        \draw[black] (-0.5,-0.5) rectangle +(3,3);
        \draw[->] (3.5,1) -- node[above]{Black} (6.5,1);
      \end{scope}
      \begin{scope}[xshift={3*\offset}]
        \fill[black] (-0.5,-0.5) rectangle +(3,3);
        \whitesquare(0,2);
        \draw[black] (-0.5,-0.5) rectangle +(3,3);
        \draw[->] (3.5,1) -- node[above]{White} (6.5,1);
      \end{scope}
      \begin{scope}[xshift={4*\offset}]
        \fill[black] (-0.5,-0.5) rectangle +(3,3);
        \whitesquare(0,0);
        \whitesquare(1,0);
        \whitesquare(2,0);
        \whitesquare(0,1);
        \whitesquare(1,1);
        \whitesquare(2,1);
        \whitesquare(0,2);
        \whitesquare(1,2);
        \whitesquare(2,2);
        \draw[black] (-0.5,-0.5) rectangle +(3,3);
      \end{scope}
    \end{scope}
    \begin{scope}[yshift=\yoffset]
      \path (-2,1) node {(b)};
      \begin{scope}
        \foreach\i in {0,...,2} {
          \draw (0,\i) -- (2,\i+0);
          \draw (\i,0) -- (\i,2);
        }
        \blackvertex(0,0);
        \blackvertex(0,1);
        \blackvertex(0,2);
        \blackvertex(1,0);
        \blackvertex(1,1);
        \blackvertex(1,2);
        \blackvertex(2,0);
        \blackvertex(2,1);
        \blackvertex(2,2);
        \whitevertex(0,0);
        \whitevertex(0,2);
        \whitevertex(1,1);
        \whitevertex(2,0);
        \whitevertex(2,2);
        \draw[->] (3.5,1) -- node[above]{Black} (6.5,1);
      \end{scope}
      \begin{scope}[xshift=\offset]
        \foreach\i in {0,...,2} {
          \draw (0,\i) -- (2,\i+0);
          \draw (\i,0) -- (\i,2);
        }
        \blackvertex(0,0);
        \blackvertex(0,1);
        \blackvertex(0,2);
        \blackvertex(1,0);
        \blackvertex(1,1);
        \blackvertex(1,2);
        \blackvertex(2,0);
        \blackvertex(2,1);
        \blackvertex(2,2);
        \whitevertex(0,0);
        \whitevertex(0,2);
        \whitevertex(1,1);
        \whitevertex(2,0);
        \draw[->] (3.5,1) -- node[above]{White} (6.5,1);
      \end{scope}
      \begin{scope}[xshift={2*\offset}]
        \foreach\i in {0,...,2} {
          \draw (0,\i) -- (2,\i+0);
          \draw (\i,0) -- (\i,2);
        }
        \blackvertex(0,0);
        \blackvertex(0,1);
        \blackvertex(0,2);
        \blackvertex(1,0);
        \blackvertex(1,1);
        \blackvertex(1,2);
        \blackvertex(2,0);
        \blackvertex(2,1);
        \blackvertex(2,2);
        \whitevertex(0,0);
        \whitevertex(0,2);
        \whitevertex(1,1);
        \whitevertex(2,0);
        \whitevertex(1,0);
        \draw[->] (3.5,1) -- node[above]{Black} (6.5,1);
      \end{scope}
      \begin{scope}[xshift={3*\offset}]
        \foreach\i in {0,...,2} {
          \draw (0,\i) -- (2,\i+0);
          \draw (\i,0) -- (\i,2);
        }
        \blackvertex(0,0);
        \blackvertex(0,1);
        \blackvertex(0,2);
        \blackvertex(1,0);
        \blackvertex(1,1);
        \blackvertex(1,2);
        \blackvertex(2,0);
        \blackvertex(2,1);
        \blackvertex(2,2);
        \whitevertex(0,2);
        \draw[->] (3.5,1) -- node[above]{White} (6.5,1);
      \end{scope}
      \begin{scope}[xshift={4*\offset}]
        \foreach\i in {0,...,2} {
          \draw (0,\i) -- (2,\i+0);
          \draw (\i,0) -- (\i,2);
        }
        \blackvertex(0,0);
        \blackvertex(0,1);
        \blackvertex(0,2);
        \blackvertex(1,0);
        \blackvertex(1,1);
        \blackvertex(1,2);
        \blackvertex(2,0);
        \blackvertex(2,1);
        \blackvertex(2,2);
        \whitevertex(0,0);
        \whitevertex(1,0);
        \whitevertex(2,0);
        \whitevertex(0,1);
        \whitevertex(1,1);
        \whitevertex(2,1);
        \whitevertex(0,2);
        \whitevertex(1,2);
        \whitevertex(2,2);
      \end{scope}
    \end{scope}
    \begin{scope}[yshift=2*\yoffset]
      \path (-2,1) node {(c)};
      \begin{scope}
        \foreach\i in {0,...,2} {
          \draw (0,\i) -- (2,\i+0);
          \draw (\i,0) -- (\i,2);
        }
        \blackvertex(0,1);
        \blackvertex(1,0);
        \blackvertex(1,2);
        \blackvertex(2,1);
        \whitevertex(0,0);
        \whitevertex(0,2);
        \whitevertex(1,1);
        \whitevertex(2,0);
        \whitevertex(2,2);
        \draw[->] (3.5,1) -- node[above]{Black} (6.5,1);
      \end{scope}
      \begin{scope}[xshift=\offset]
        \edge(0,0)(0,2);
        \edge(0,0)(2,0);
        \edge(1,0)(1,1);
        \edge(0,1)(1,1);
        \edge(2,2)(0,2);
        \edge(2,2)(1,1);
        \edge(2,2)(2,0);
        \blackvertex(0,1);
        \blackvertex(1,0);
        \blackvertex(2,2);
        \whitevertex(0,0);
        \whitevertex(0,2);
        \whitevertex(1,1);
        \whitevertex(2,0);
        \draw[->] (3.5,1) -- node[above]{White} (6.5,1);
      \end{scope}
      \begin{scope}[xshift={2*\offset}]
        \edge(0,1)(0,2);
        \edge(0,2)(2,2);
        \edge(2,2)(1,0);
        \edge(1,0)(0,1);
        \blackvertex(0,1);
        \blackvertex(2,2);
        \whitevertex(1,0);
        \whitevertex(0,2);
        \draw[->] (3.5,1) -- node[above]{Black} (6.5,1);
      \end{scope}
      \begin{scope}[xshift={3*\offset}]
        \edge(0,2)(1,0);
        \blackvertex(1,0);
        \whitevertex(0,2);
        \draw[->] (3.5,1) -- node[above]{White} (6.5,1);
      \end{scope}
      \begin{scope}[xshift={4*\offset}]
        \whitevertex(1,0);
      \end{scope}
    \end{scope}
  \end{tikzpicture}
  \]
  \caption{(a) An example game of Paintbucket played by the authors. White
    makes the last move and therefore wins. (b) The same game, played
    on a graph. (c) The same game, played on a bipartite graph.}
  \label{fig:paintbucket-example}
\end{figure}
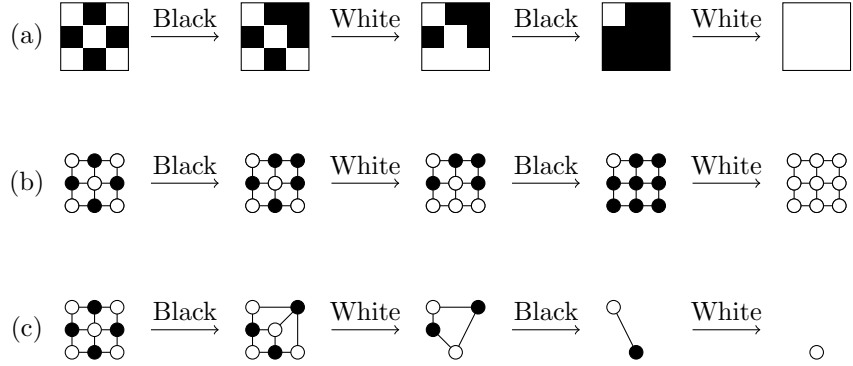

In this paper, we prove that the problem of determining the winner in
a given position of Paintbucket on graphs is PSPACE-complete.

\paragraph{Related work.}
Burke and Tennenhouse studied the game Flag Coloring \cite{BT2024},
which in the case of two colors can be thought of as an impartial
version of Paintbucket. They showed that Flag Coloring on graphs is
PSPACE-complete.

\section{Avoider-enforcer games}
\label{sec:ae-games}

We will show the PSPACE-completeness of Paintbucket on graphs by
reduction from a known PSPACE-complete problem, namely the decision
problem for \emph{avoider-enforcer games}, which we now define.

\begin{definition}
  The avoider-enforcer game is played on a pair $(C,\Aa)$, where $C$
  is a finite set of \emph{cells} and $\Aa=(A_j)_{j\in J}$ is a family
  of subsets of $C$, which we call the \emph{avoider sets}. The
  players, who are called the \emph{avoider} and the \emph{enforcer},
  take turns. On each turn, a player claims one cell, which afterwards
  cannot be claimed again (we can think of the players as ``coloring''
  the cells). The game finishes when all cells are colored, and the
  avoider loses if and only if they have colored all the elements of
  some avoider set.
\end{definition}

\begin{remark}\label{rem:alternate}
  The following is an equivalent description of the avoider-enforcer
  game. Given a position $(C,\Aa)$, a move by the avoider consists of
  choosing some $c\in C$, and then the new position is $(C',\Aa')$,
  where
  \[
  C' = C \setminus \s{c}
  \quad\mbox{and}\quad
  \Aa' = (A_j\setminus\s{c})_{j\in J}.
  \]
  A move by the enforcer in position $(C,\Aa)$ consists of choosing some
  $c\in C$, and then the new position is $(C'',\Aa'')$, where
  \[
  C'' = C \setminus \s{c}
  \quad\mbox{and}\quad
  \Aa'' = (A_j)_{j\in J,\,c\not\in A_j}.
  \]
  In other words, the avoider removes a cell from $C$ and from all
  avoider sets, whereas the enforcer removes a cell from $C$ and
  removes all avoider sets containing it. The game ends when
  $C=\emptyset$. In this case, either $\Aa=\emptyset$, in which case
  the avoider wins, or $\Aa=\s{\emptyset}$, in which case the enforcer
  wins. 
\end{remark}

The decision problem for avoider-enforcer games is the following:
given a position $(C,\Aa)$ and a player to move, decide whether the
avoider has a winning strategy. This problem is known to be
PSPACE-complete, even in the case where each avoider set consists of
exactly 6 cells {\cite{GO2023}}.

\begin{remark}\label{rem:even}
  The decision problem for avoider-enforcer games remains
  PSPACE-complete even if we restrict the possible instances to those
  where the number of cells $|C|$ is even and the avoider moves
  first. Indeed, given a position where the enforcer moves first, we
  can simply add one additional cell $c$ to $C$ and to none of the
  avoider sets, and let the avoider move first; it is easy to see that
  it is in the avoider's interest to play their first move at $c$,
  after which the game is equivalent to what we started with. We can
  therefore assume without loss of generality that the avoider moves
  first. Now, consider the case where $|C|$ is odd. We modify the game
  by adding one more cell $c'$ and exactly one additional avoider set
  $\s{c'}$. Clearly, the avoider loses if they ever play at $c'$, and
  it is also easy to see that the enforcer prefers every other move to
  $c'$. Therefore, we can assume without loss of generality that $c'$
  will be the last cell played; since it is the enforcer who plays
  last, this game is equivalent to the one that we started with.
\end{remark}

\section{Paintbucket on bipartite graphs}

The game of Paintbucket on graphs was defined in the introduction. We
now give an equivalent description where every connected component of
a color is contracted to a single vertex.

The game is played on a connected bipartite graph $(V_b, V_w, E)$,
where $V_b$ and $V_w$ are finite sets of vertices that we call
\emph{black} and \emph{white} vertices, respectively, and $E\seq
V_b\times V_w$ is a set of edges, each of which connects a black
vertex to a white one.

A move by Black consists of choosing one white vertex $v$, coloring it
black, and then immediately merging it with all of its (necessarily
black) neighbors. More precisely, by ``merging'', we mean that all of
the neighbors $w_1,\ldots,w_k$ of $v$ are deleted from the graph, and
new edges are added between $v$ and all the (necessarily white)
neighbors of $w_1,\ldots,w_k$. The resulting graph is again bipartite
and connected. A move by White is defined dually. The game ends when
the graph consists of a single vertex. Black wins if and only if the
final vertex is black.  Equivalently, the winner is the player who
made the last move.

It is easy to see that this description of Paintbucket using bipartite
graphs is equivalent to the game of Paintbucket on graphs described in
the introduction. For example, Figure~\ref{fig:paintbucket-example}(c)
shows the same game as Figure~\ref{fig:paintbucket-example}(a) and
(b), played in this setting. From now on, when we refer to
Paintbucket, we mean Paintbucket on bipartite graphs, unless otherwise
stated.

The following lemmas state some properties of Paintbucket that will
be useful to us later.

\begin{lemma}\label{lem:claw}
  Consider Paintbucket played on a bipartite graph $G$ with a
  distinguished white vertex $v$ to which $k$ black leaves are
  attached, as in Figure~\ref{fig:claw}.  Let $m$ be the number of
  white vertices of $G$. Then if $m\leq k$, Black has a first-player
  winning strategy. Moreover, if $m<k$, Black has a winning strategy
  regardless of who goes first.
\end{lemma}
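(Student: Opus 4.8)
The plan is to prove a slightly stronger pair of statements simultaneously, by strong induction on the total number $N$ of vertices of $G$: (P1) if $m\leq k$ then Black wins moving first, and (P2) if $m<k$ then Black wins no matter who moves first. Every legal move deletes at least one vertex, so $N$ strictly decreases and the induction is well-founded. The facts I would establish first, from the bipartite description of the game, are the bookkeeping rules for how the number $m$ of white vertices and the leaf-count at $v$ transform under a move. A Black move on a white vertex other than $v$ decreases $m$ by exactly $1$ and leaves $v$ together with all $k$ of its leaves untouched: each leaf has degree one and is adjacent only to $v$, so it is never among the merged neighbours of the played vertex, and it acquires no new edges. A White move, on the other hand, never increases $m$, and the only White move that can decrease the leaf-count of $v$ is playing one of the leaves itself, which decreases $k$ by $1$ while keeping $m$ unchanged (the leaf simply merges back into $v$).

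For (P1): if $m=1$ the graph is exactly $v$ together with its $k$ leaves, so Black plays $v$, absorbs every leaf, and wins in a single move. If $m\geq 2$ there is a white vertex $u\neq v$; Black plays it, reaching a position in which $v$ still carries all $k$ leaves, there are $m'=m-1$ white vertices, and it is White's turn. Since $m\leq k$ gives $m'=m-1<k=k'$ and $N'<N$, statement (P2) applies at the smaller position and Black wins.

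For (P2): if Black moves first the conclusion is immediate from (P1) at the same value of $N$, using $m<k\Rightarrow m\leq k$; this is legitimate provided (P1) is proved before (P2) at each level, so no circularity arises, since (P1) at $N$ invokes only (P2) at values smaller than $N$. If White moves first, I would check that \emph{every} White move yields a position, with Black to move and $N'<N$, satisfying the hypothesis $m'\leq k'$ of (P1). Playing a leaf gives $m'=m$ and $k'=k-1$, and since $m<k$ we have $m\leq k-1=k'$. Playing any other black vertex gives $m'\leq m<k=k'$, with the $k$ leaves either left undisturbed (if the played vertex is not adjacent to $v$) or transferred intact to the white vertex into which $v$ is merged (if it is). In every case (P1) applies at the smaller position, so Black wins.

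The main obstacle, and the part deserving genuine care, is this final case analysis for White's move in (P2): one must verify that the leaves of $v$ survive, or are correctly re-counted at the merged vertex, under each kind of White move, and that the resulting inequality $m'\leq k'$ holds in every case, so that the move always hands Black a position covered by (P1). The supporting structural bookkeeping — that moves preserve connectedness and bipartiteness, and that a degree-one leaf can only be consumed by the move that plays it or by the move that plays its unique neighbour $v$ — is routine, but it is precisely what makes the clean inductive inequalities go through.
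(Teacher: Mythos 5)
Your proof is correct and is essentially the paper's argument: the paper simply has Black play anywhere except $v$, noting that each Black move decreases $m$ by exactly $1$ while each White move decreases $k$ by at most $1$, so that when $m=1$ on Black's turn a leaf still survives and Black wins by playing $v$ --- which is precisely your invariant pair (P1)/(P2) unrolled into a counting argument. Your induction just formalizes that count, making explicit the bookkeeping (leaves surviving merges, White moves never increasing $m$) that the paper's terse proof leaves implicit.
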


\begin{figure}
  \[
  \begin{tikzpicture}[scale=0.3]
    \def\v{3}
    \def\w{1.5}
    \draw (0,0) -- (-2*\w,\v) node[above=1mm] {$1$};
    \draw (0,0) -- (-1*\w,\v) node[above=1mm] {$2$};
    \draw (0,0) -- (0*\w,\v) node[above=1mm] {$3$};
    \draw (0,0) -- (2*\w,\v) node[above=1mm] {$k$};
    \draw (0,-1) circle [x radius=2, y radius=1] node {$G$};
    \blackvertex(-2*\w,\v);
    \blackvertex(-1*\w,\v);
    \blackvertex(0*\w,\v);
    \path (1*\w,\v) node {$\cdots$};
    \blackvertex(2*\w,\v);
    \whitevertex(0,0);
  \end{tikzpicture}
  \]
  \caption{A bipartite graph with $k$ black leaves attached to a white
    vertex.}
  \label{fig:claw}
\end{figure}
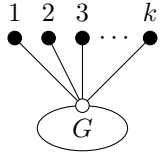

\begin{proof}
  Black's strategy is to play anywhere except $v$, unless $v$ is the
  only remaining white vertex. Therefore, each time Black plays, $m$
  decreases by exactly $1$. Each time White plays, $k$ decreases by at
  most $1$. Therefore, when $m=1$ and it is Black's turn, there is
  still at least one black leaf and Black wins by playing $v$.
\end{proof}

\begin{lemma}\label{lem:neighbors}
  In Paintbucket on a bipartite graph, if $n$ vertices have the same
  set of neighbors and a player plays at one of the $n$ vertices, the
  remaining ones become leaves that share the same neighbor.
\end{lemma}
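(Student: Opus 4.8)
The plan is to unwind the definition of a move directly. Since the graph is bipartite and the $n$ vertices share a common neighbor set, those neighbors all lie on the opposite side, so the $n$ vertices themselves lie on a single side. Without loss of generality I would take them to be white vertices $u_1,\ldots,u_n$ with common neighbor set $N\seq V_b$; the case of $n$ black vertices is dual, with the roles of the two players exchanged. A white vertex is only ever played by Black, so I would suppose Black plays at $u_1$.

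Next I would trace what the move does. By definition, playing at $u_1$ recolors it black, deletes all of its black neighbors --- which are exactly the vertices of $N$ --- and adds an edge from $u_1$ to every (white) vertex that was a neighbor of some vertex in $N$. I would then examine each remaining vertex $u_i$ with $i\geq 2$. Before the move, the set of neighbors of $u_i$ was precisely $N$, since all $n$ vertices share this neighbor set. Every one of these neighbors is deleted by the move, so $u_i$ loses all of its former edges. On the other hand, $u_i$ was a white neighbor of each vertex of $N$, so the merge creates an edge from $u_i$ to $u_1$. Hence after the move $u_i$ has $u_1$ as its unique neighbor, i.e., it is a leaf attached to $u_1$. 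Since this holds for every $i\geq 2$, the vertices $u_2,\ldots,u_n$ all become leaves sharing the common neighbor $u_1$, as claimed.

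I do not anticipate a genuine obstacle here, since the statement is essentially a direct reading of the merge rule. The only points requiring a moment's care are confirming that $u_i$ retains no other edges (which is immediate from the hypothesis that $N$ is its \emph{entire} neighbor set, so that none of its edges survive the deletion of $N$) and checking that the simple-graph convention collapses the several former edges from $u_i$ to the vertices of $N$ into a single edge $u_1u_i$, rather than producing parallel edges.
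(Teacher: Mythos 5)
Your proof is correct and follows essentially the same route as the paper's: a direct unwinding of the merge rule, showing each remaining vertex loses all its edges (its entire neighbor set is deleted) and gains exactly one edge to the merged vertex. The only cosmetic differences are that you take the shared-neighbor vertices to be white where the paper takes them black (the dual case, which you rightly note), and you spell out the simple-graph collapsing of parallel edges, which the paper leaves implicit.
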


\begin{proof}
  Suppose that $v_1,\ldots,v_n$ are black vertices that have the same
  neighbors $u_1,\ldots,u_k$, and suppose White plays at $v_1$. Then
  $v_1$ and $u_1,\ldots,u_k$ are all contracted into a single vertex
  $u$, which becomes the unique neighbor of each of $v_2,\ldots,v_n$
  in the resulting graph. So they all become leaves sharing the same
  neighbor $u$.
\end{proof}

\begin{lemma}\label{lem:complete}
  Consider Paintbucket played on the complete bipartite graph
  $K_{m,n}$, i.e., the bipartite graph with $m$ black vertices, $n$
  white vertices, and all possible edges. Then:
  \begin{enumerate}\alphalabels
  \item If $m,n>1$, the position is a second-player win.
  \item If $m>1$ and $n=1$, the position is winning for Black,
    no matter who goes first.
  \item If $m=1$ and $n>1$, the position is winning for White,
    no matter who goes first.
  \item If $m=n=1$, the position is a first-player win.
  \end{enumerate}
\end{lemma}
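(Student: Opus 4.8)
The plan is to exploit the fact that the complete bipartite structure collapses entirely after a single move, so that the game tree is shallow and can be settled by a short case split together with the duality between the two colors.

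First I would record the key move computation. In $K_{m,n}$ with $m,n\geq 1$, when Black colors a white vertex $v$, every one of the $m$ black vertices is a neighbor of $v$ and is therefore merged into $v$; the resulting black vertex is then adjacent to exactly the $n-1$ remaining white vertices. Thus \emph{any} Black move in $K_{m,n}$ produces $K_{1,n-1}$, independently of which white vertex is chosen and of the value of $m$. Dually, any White move produces $K_{m-1,1}$. This single observation does essentially all of the work. (It can be seen directly from the merging definition, or via Lemma~\ref{lem:neighbors}.)

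Next I would dispatch the base case (d). In $K_{1,1}$ the player to move colors the unique vertex of the opposite color, which merges with its only neighbor to leave a single vertex of the mover's color; the mover thus makes the last move and wins, so $K_{1,1}$ is a first-player win. For (b), with $m>1$: if Black moves first, Black colors the unique white vertex, collapsing all black vertices into one and leaving a single black vertex, so Black wins immediately. If White moves first, White's only legal move colors a black vertex, which by the move computation yields $K_{m-1,1}$ with $m-1\geq 1$ and Black to move; Black then colors the white vertex and wins as before. Hence Black wins no matter who starts. Part (c) is then immediate from (b) by the color-swapping duality of the game: interchanging the black and white vertices together with the two players gives an isomorphic game.

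Finally, for (a) with $m,n>1$: if Black moves first, the position becomes $K_{1,n-1}$ with White to move and $n-1\geq 1$. When $n=2$ this is $K_{1,1}$ with White to move, a first-player win for White by (d); when $n>2$ this is $K_{1,n-1}$ with $n-1>1$, a White win regardless of who moves by (c). In both cases White wins. By the same duality, if White moves first then Black wins. Therefore in $K_{m,n}$ with $m,n>1$ the first player always loses, which is exactly the claim that the position is a second-player win. I do not expect a genuine obstacle here; the only point requiring care is the small-$n$ (resp.\ small-$m$) boundary, where the reduced star is $K_{1,1}$ rather than a larger star, so that (d) must be invoked in place of (c).
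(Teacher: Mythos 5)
Your proof is correct, and it follows essentially the same route as the paper: both arguments rest on the observation that any move in $K_{m,n}$ collapses the mover's entire target side, leaving a star, after which a short case split finishes. You diverge in two minor ways. For (b) and (c), the paper simply cites Lemma~\ref{lem:claw} and its dual (viewing $K_{m,1}$ as $m$ black leaves attached to a white vertex, with $1 < m$ so the strict-inequality case applies regardless of who moves), whereas you argue directly by playing out at most two moves; your version is self-contained but in effect reproves a special case of that lemma. For (a), after the first player's move the paper observes that the opponent wins \emph{immediately} by playing the unique vertex of the collapsed color, which ends the game in one move; you instead invoke (c) or (d) on the resulting star $K_{1,n-1}$, which forces you to treat the boundary case $n=2$ separately via (d) --- exactly the care that the one-move observation renders unnecessary. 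Net effect: your proof is marginally longer, but it makes explicit the reusable computation that every Black move in $K_{m,n}$ yields $K_{1,n-1}$ and every White move yields $K_{m-1,1}$, which the paper leaves implicit.
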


\begin{proof}
  (d) is obvious, and (b) and (c) follow by Lemma~\ref{lem:claw} and
  its dual. To prove (a), note that if Black plays first, Black
  collapses all black vertices and leaves $n-1$ white vertices
  remaining. White wins by playing at the only remaining black vertex.
  If White plays first, the situation is dual.
\end{proof}

\section{Paintbucket is PSPACE-complete}

\subsection{Statement of the main result}

The decision problem for Paintbucket on bipartite graphs is the
following: Given a connected bipartite graph and a player to move,
determine whether Black has a winning strategy. For concreteness, we
consider the problem size to be the number $n$ of vertices of the
graph. Note that it would be equally valid to use the number of edges,
because the number of edges in a connected graph is between $n-1$ and
$n^2$, which is polynomial in $n$. We will prove the following
theorem:

\begin{theorem}\label{thm:main}
  The decision problem for Paintbucket on bipartite graphs is
  PSPACE-complete.
\end{theorem}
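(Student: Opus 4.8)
The plan is to prove the two halves of PSPACE-completeness separately: membership in PSPACE, and PSPACE-hardness by reduction from the avoider-enforcer decision problem.

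For membership, I would observe that Paintbucket is a short game. In the bipartite formulation, every move replaces a chosen vertex together with its (nonempty) set of neighbors by a single vertex, so each move strictly decreases the number of vertices; hence any play from a position on $n$ vertices lasts at most $n-1$ moves. A position is encodable in $O(n^2)$ bits (the bipartite adjacency relation), and the successor position after a given move is computable in polynomial time. Determining whether the player to move wins is then the standard alternating recursion: a position is a win for the mover iff some legal move leads to a position that is a loss for the opponent. The recursion depth is bounded by the game length $n-1$, and each stack frame stores only a polynomial-size position and a move counter, so the whole computation runs in polynomial space, and the problem is in PSPACE.

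The substance is hardness. By Remark~\ref{rem:even} I may assume the input instance $(C,\Aa)$ has $|C|$ even, has all avoider sets of size $6$, and has the avoider to move first; by Remark~\ref{rem:alternate} I may use the ``removal'' description of moves. From such an instance I would build, in polynomial time, a connected bipartite graph $H$ with Black to move, so that Black wins Paintbucket on $H$ if and only if the avoider wins on $(C,\Aa)$. I identify the avoider with Black and the enforcer with White, so that the alternation Black, White, Black, White, $\ldots$ mirrors avoider, enforcer, avoider, enforcer, $\ldots$. The graph $H$ has three kinds of parts. First, a \emph{cell gadget} for each $c\in C$: a white vertex $c_w$ and a black vertex $c_b$ joined by an edge. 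Claiming $c$ corresponds to collapsing this pair, and the color of the surviving vertex records who claimed it --- black if Black played $c_w$, white if White played $c_b$ --- which is exactly the distinction the two removal rules of Remark~\ref{rem:alternate} make. Second, a \emph{set gadget} for each avoider set $A_j$, attached to the six cell gadgets of $A_j$ and designed so that it is neutralized the moment any of those cells is claimed by White, using Lemma~\ref{lem:neighbors} and Lemma~\ref{lem:complete}, whereas a set all of whose cells are claimed by Black persists as a live structure, representing an avoider set that has become empty and hence a loss for the avoider. Third, a \emph{tempo gadget}: a large claw as in Lemma~\ref{lem:claw}, whose number of leaves $k$ is a fixed polynomial in $|C|+|J|$ chosen to dominate the number of white (resp.\ black) vertices contributed by all other gadgets. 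Its role is to convert ``which player controls the final move'' into a decisive win via Lemma~\ref{lem:claw} (and its evident dual for White): whichever player can keep the tempo while the opponent is forced to chip away at the long claw makes the last move, and the construction is tuned so that this player is the enforcer exactly when a live set survives and the avoider otherwise.

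The main obstacle --- and the bulk of the work --- is proving the correspondence in both directions while ruling out deviations. In the forward direction I would lift an avoider (enforcer) strategy on $(C,\Aa)$ to a Paintbucket strategy that plays the corresponding cell collapses in order; in the backward direction I would read off an avoider-enforcer strategy from an optimal Paintbucket strategy. The delicate point is that a Paintbucket player is never obliged to play a ``cell'' move: they might play inside a set gadget, start eating the tempo claw early, or otherwise break the intended turn order. Here Lemma~\ref{lem:claw} is the enforcement mechanism: because the claw has many more leaves than there are white (resp.\ black) vertices elsewhere, any player who squanders a move on a non-simulating play loses a unit of tempo and thereby hands the long claw, and with it the final move, to the opponent. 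Verifying that every such deviation is strictly dominated --- so that optimal play for both sides really does simulate the avoider-enforcer game move for move, and the parity of the total number of moves comes out so that Black makes the last move precisely when no live set survives, i.e.\ precisely when the avoider wins --- is the technical heart of the reduction, and is where the careful tuning of the gadget sizes and of the leaf count $k$ must be checked.
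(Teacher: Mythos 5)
Your PSPACE-membership argument is fine and is essentially the paper's (the paper dismisses it as easy, and your move-counting recursion is a correct way to fill it in). The hardness half, however, has a genuine gap exactly where you yourself locate ``the technical heart'': the set gadget is never constructed, and the anti-deviation argument is asserted rather than proved. The paper's reduction hinges on an idea absent from your sketch, namely two \emph{universal} vertices: a black vertex $r$ adjacent to every white vertex and a white vertex $s$ adjacent to every black vertex. Because of these, a merge never creates new adjacencies: when Black plays $u_i$, the vertex $v_i$ merges into $r$, which already has all of $v_i$'s neighbors, so the move amounts to deleting the pair $u_i,v_i$; when White plays $v_i$, the clusters $t_{j,1},\ldots,t_{j,K}$ of every avoider set $A_j\ni c_i$ are swallowed into $s$ with no new edges. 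Thus the position after an intended move is literally isomorphic to $\G{K}{(C',\Aa')}$ for the successor avoider-enforcer position (Lemma~\ref{lem:simulation}), which is what makes an induction on $|C|$ (Proposition~\ref{pro:main}) possible. Note that ``neutralizing a set gadget the moment White claims one of its cells'' is exactly what the universal vertex $s$ delivers for free. Without some such mechanism, a collapse move entangles gadgets by creating new adjacencies, the move-for-move correspondence you assume breaks down, and your bookkeeping device---``the color of the surviving vertex records who claimed the cell''---does not survive later merges, since in general the surviving vertex is absorbed into its neighbors on subsequent moves.

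The second gap is your tempo heuristic for ruling out deviations. The claim that a non-simulating move ``loses a unit of tempo and thereby hands the long claw to the opponent'' is not sound as stated, because Paintbucket moves are not tempo-neutral: a single deviation can swallow arbitrarily many of the opponent's vertices at once (in the paper's graph, a Black move at some $t_{j,k}$ collapses $r$ and all $K$ vertices $w_1,\ldots,w_K$ in one stroke), so there is no conserved quantity for a gradual tempo war to track. The paper argues quite differently: Lemmas~\ref{lem:shenanigan-white} and~\ref{lem:shenanigan-black} show that \emph{any} deviation loses essentially immediately, because by Lemma~\ref{lem:neighbors} the deviating move turns the remainder of a size-$K$ cluster into $K-1$ leaves hanging off a single vertex, and the calibration $K\geq|C|+2$ then makes the counting in Lemma~\ref{lem:claw} (at most $I+1\leq K-1$ opposing vertices remain) decide the game on the spot. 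So while your high-level plan---reduce from avoider-enforcer normalized by Remark~\ref{rem:even}, one black--white edge pair per cell, claw counting as the enforcement mechanism---matches the paper's, the two steps you defer are precisely where all the content lies, and the static-claw-plus-tempo mechanism you gesture at would have to be replaced by something like the universal-vertex construction to yield a proof.
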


\subsection{Translation from avoider-enforcer games to Paintbucket}

We prove the theorem by reducing the avoider-enforcer game to
Paintbucket. Given a position $(C,\Aa)$ of the avoider-enforcer game
and a natural number $K$, we will define a connected bipartite graph
$\G{K}{(C,\Aa)}$ as follows.

Let $I=|C|$ and enumerate the cells as $C=\s{c_1,\ldots,c_I}$. Also
let $\Aa=(A_j)_{j\in \s{1,\ldots,J}}$.  From now on, unless otherwise
specified, the indices $i$, $j$, and $k$ will always range over the
sets $\s{1,\ldots,I}$, $\s{1,\ldots,J}$, $\s{1,\ldots,K}$,
respectively. The graph $\G{K}{(C,\Aa)}$ will have vertices $v_i$,
$u_i$, $w_k$, $t_{j,k}$, $r$, and $s$. We assume all of these vertices
to be distinct. We refer to these vertices as being of \emph{type}
$v$, $u$, $w$, $t$, $r$, and $s$, respectively. We define
$\G{K}{(C,\Aa)}=(V_b,V_w,E)$, where
\[
\begin{array}{lll}
  V_b &=& \s{v_i} ~\cup~ \s{w_k} ~\cup~ \s{r}, \\
  V_w &=& \s{u_i} ~\cup~ \s{t_{j,k}} ~\cup~ \s{s}, \\
  E &=& \s{(r,u_i)} \\
  && {}\cup \s{(v_i,u_i)} \\
  && {}\cup \s{(v_i,s)}\\
  && {}\cup \s{(t_{j,k},r)} \\
  && {}\cup \s{(s,r)} \\
  && {}\cup \s{(t_{j,k},w_k)} \\
  && {}\cup \s{(s,w_k)} \\
  && {}\cup \s{(v_i,t_{j,k}) \mid c_i\in A_j}. \\
\end{array}
\]
The graph is shown schematically in Figure~\ref{fig:graph}. In words,
it can be described as follows: for each cell $c_i$ of the
avoider-enforcer game, we have a pair of a black vertex $v_i$ and a
white vertex $u_i$, connected to each other by an edge. We also have a
cluster of $K$ black vertices $w_1,\ldots,w_K$. For each avoider set
$A_j$, we have a cluster of $K$ white vertices
$t_{j,1}\ldots,t_{j,K}$, which are connected to those $v_i$ such that
$c_i\in A_j$. All of the vertices $w_k$ and $t_{j,k}$ form a complete
bipartite graph. Finally, there is a \emph{universal} black vertex $r$
which is connected to all white vertices, and a \emph{universal} white
vertex $s$ which is connected to all black vertices (including $r$).

\begin{figure}
  \[
  \begin{tikzpicture}[scale=0.3]
    \def\xa{2} 
    \def\xb{3} 
    \def\woffset{-9}
    \def\toffset{5}
    \def\xc{2}
    \def\rcoord{\woffset}
    \def\wcoord#1{\woffset+\xb+\xa*#1}
    \def\tcoord#1#2#3{\toffset+\xa*#2+#1*\xb+#1*2*\xa+#3*\xc}
    \def\scoord{\toffset+3*\xb+3*2*\xa+\xc}
    \def\labelsep{4mm}
    \def\tlabelsep{17mm}
    \def\alabelsep{22mm}
    \def\ulabelsep{2mm}
    \def\yoffset{-2}
    \def\ya{2.25}
    \def\yc{-1}
    \def\ucoord#1#2{\yoffset-#1*\ya+#2*\yc}
    \def\voffset{3}
    \def\vplusoffset{19}
    \def\bend{6}
    \def\tshift{-1mm}
    
    \foreach\k in {0,...,2} {
      \foreach\j in {0,...,1} {
        \foreach\l in {0,...,2} {
          \draw (\wcoord{\k},0) .. controls +(0,\bend) and +(0,\bend) .. (\tcoord{\j}{\l}{0},0);
        }
      }
    }
    \foreach\k in {0,...,2} {
      \foreach\l in {0,...,2} {
        \draw (\wcoord{\k},0) .. controls +(0,\bend) and +(0,\bend) .. (\tcoord{2}{\l}{1},0);
      }
    }
    \foreach\k in {0,...,2} {
      \draw (\wcoord{\k},0) .. controls +(0,\bend) and +(0,\bend) .. (\scoord,0);
    }
    \foreach\j in {0,...,1} {
      \foreach\l in {0,...,2} {
        \draw (\rcoord,0) .. controls +(0,\bend) and +(0,\bend) .. (\tcoord{\j}{\l}{0},0);
      }
    }
    \foreach\l in {0,...,2} {
      \draw (\rcoord,0) .. controls +(0,\bend) and +(0,\bend) .. (\tcoord{2}{\l}{1},0);
    }
    \draw (\rcoord,0) .. controls +(0,\bend) and +(0,\bend) .. (\scoord,0);

    \foreach\i in {1,...,3} {
      \draw (\rcoord,0) .. controls +(0,-0.5*\bend) and +(-1.5*\bend,0) .. (0,\ucoord{\i}{0});
      \draw ((0,\ucoord{\i}{0}) -- ((\voffset,\ucoord{\i}{0});
      \draw ((\voffset,\ucoord{\i}{0}) -- ((\vplusoffset,\ucoord{\i}{0}) .. controls +(1.5*\bend,0) and +(0,-0.5*\bend) .. (\scoord,0);
    }
    \draw (\rcoord,0) .. controls +(0,-0.5*\bend) and +(-1.5*\bend,0) .. (0,\ucoord{4}{1});
    \draw ((0,\ucoord{4}{1}) -- ((\voffset,\ucoord{4}{1});
    \draw ((\voffset,\ucoord{4}{1}) -- (\vplusoffset,\ucoord{4}{1}) .. controls +(1.5*\bend,0) and +(0,-0.5*\bend) .. (\scoord,0);

    \foreach\i in {1,2} {
      \foreach\k in {0,...,2} {
        \draw[red] (\voffset,\ucoord{\i}{0}) -- (\tcoord{0}{\k}{0},0);
      }
    }
    
    \foreach\i in {2,3} {
      \foreach\k in {0,...,2} {
        \draw[blue] (\voffset,\ucoord{\i}{0}) -- (\tcoord{1}{\k}{0},0);
      }
    }

    \foreach\i in {3} {
      \foreach\k in {0,...,2} {
        \draw[green!50!black] (\voffset,\ucoord{\i}{0}) -- (\tcoord{2}{\k}{1},0);
      }
    }

    \foreach\i in {4} {
      \foreach\k in {0,...,2} {
        \draw[green!50!black] (\voffset,\ucoord{\i}{1}) -- (\tcoord{2}{\k}{1},0);
      }
    }
    
    \path (\rcoord,0) node[xshift=-1.5mm,below=\labelsep,anchor=base] {$r$};
    \path (\wcoord{0},0) node[below=\labelsep,anchor=base] {$w_1$};
    \path (\wcoord{1},0) node[below=\labelsep,anchor=base] {$\ldots$};
    \path (\wcoord{2},0) node[below=\labelsep,anchor=base] {$w_K$};
    \path (\tcoord{0}{0}{0},0) node[xshift=\tshift,above=\tlabelsep,anchor=base] {$t_{1,1}$};
    \path (\tcoord{0}{1}{0},0) node[xshift=\tshift,above=\tlabelsep,anchor=base] {$\ldots$};
    \path (\tcoord{0}{2}{0},0) node[xshift=\tshift,above=\tlabelsep,anchor=base] {$t_{1,k}$};
    \path (\tcoord{1}{0}{0},0) node[xshift=\tshift,above=\tlabelsep,anchor=base] {$t_{2,1}$};
    \path (\tcoord{1}{1}{0},0) node[xshift=\tshift,above=\tlabelsep,anchor=base] {$\ldots$};
    \path (\tcoord{1}{2}{0},0) node[xshift=\tshift,above=\tlabelsep,anchor=base] {$t_{2,K}$};
    \path (\tcoord{2}{0}{1},0) node[xshift=\tshift,above=\tlabelsep,anchor=base] {$t_{J,1}$};
    \path (\tcoord{2}{1}{1},0) node[xshift=\tshift,above=\tlabelsep,anchor=base] {$\ldots$};
    \path (\tcoord{2}{2}{1},0) node[xshift=\tshift,above=\tlabelsep,anchor=base] {$t_{J,K}$};
    \path (\tcoord{1.5}{1}{0.5},0) node {$\cdots$};
    \path (\scoord,0) node[xshift=1.5mm,below=\labelsep,anchor=base] {$s$};

    \path (\tcoord{0}{1}{0},0) node[xshift=\tshift,above=\alabelsep,anchor=base] {$A_1$:};
    \path (\tcoord{1}{1}{0},0) node[xshift=\tshift,above=\alabelsep,anchor=base] {$A_2$:};
    \path (\tcoord{2}{1}{1},0) node[xshift=\tshift,above=\alabelsep,anchor=base] {$A_J$:};

    \blackvertex(\rcoord,0);
    \foreach\k in {0,...,2} {
      \blackvertex(\wcoord{\k},0);
    }
    \foreach\j in {0,...,1} {
      \foreach\k in {0,...,2} {
        \whitevertex(\tcoord{\j}{\k}{0},0);
      }
    }
    \foreach\k in {0,...,2} {
      \whitevertex(\tcoord{2}{\k}{1},0);
    }
    \whitevertex(\scoord,0);

    \foreach\i in {1,...,3} {
      \whitevertex(0,\ucoord{\i}{0});
      \blackvertex(\voffset,\ucoord{\i}{0});
      \path (0,\ucoord{\i}{0}) node[above=\ulabelsep,anchor=base] {$u_{\i}$};
      \path (\voffset,\ucoord{\i}{0}) node[xshift=-1.5mm,above=\ulabelsep,anchor=base] {$v_{\i}$};
    }
    \whitevertex(0,\ucoord{4}{1});
    \blackvertex(\voffset,\ucoord{4}{1});
      \path (0,\ucoord{4}{1}) node[above=\ulabelsep,anchor=base] {$u_{I}$};
      \path (\voffset,\ucoord{4}{1}) node[above=\ulabelsep,anchor=base] {$v_{I}$};

    \path (0.5*\voffset,\ucoord{3.5}{0.5}) node[above=-3mm] {$\vdots$};
    
  \end{tikzpicture}
  \]
  \caption{The graph $\G{K}{(C,\Aa)}$. Here, we have assumed that
    $A_1=\s{c_1,c_2}$, $A_2=\s{c_2,c_3}$, and $A_J=\s{c_3,c_I}$.}
  \label{fig:graph}
\end{figure}

\subsection{Intended play, shenanigans, and simulation}

Our eventual goal is to show that whoever wins the game $(C,\Aa)$ also
wins the corresponding game $\G{K}{(C,\Aa)}$. To do so, we consider
the following notion of ``intended play'' for the Paintbucket
game. Note that there are $I$ pairs of vertices $\s{u_i,v_i}$. The
intended play is that the first $I$ moves of the game occur on these
vertices, with each pair being played in exactly once. The players are
not obliged to play in the intended way, but we will show that any
player who deviates from the intended play will lose the game
immediately. This is proved in the next two lemmas. When a player
deviates from intended play, we call it a \emph{shenanigan} by that
player.

\begin{lemma}[White shenanigans]\label{lem:shenanigan-white}
  Assume $K\geq|C|+2$, and suppose it is White's turn in the
  Paintbucket game on the graph $\G{K}{(C,\Aa)}$. If White's next move
  is not at a vertex of type $v$, White loses.
\end{lemma}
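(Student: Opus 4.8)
The plan is to split on the type of the black vertex that White plays. Since the black vertices are exactly those of type $v$, type $w$, and the single vertex $r$, a non-$v$ move is either at some $w_k$ or at $r$. In each case I would compute the bipartite graph produced by the contraction that White's move triggers, and then exhibit an immediate winning response for Black, citing Lemmas~\ref{lem:neighbors}, \ref{lem:claw}, and~\ref{lem:complete}.

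\textbf{Case White plays some $w_k$.} The first observation is that $w_1,\ldots,w_K$ all have exactly the same neighbourhood: each is joined to $s$ and, because the $w$'s and $t$'s form a complete bipartite graph, to every $t_{j,k'}$. Hence by Lemma~\ref{lem:neighbors} the move at $w_k$ turns the remaining $K-1$ vertices $w_{k''}$ into black leaves, all attached to the single white vertex $W$ created by the contraction (which absorbs $s$ and all the $t_{j,k'}$). The surviving white vertices are then precisely $u_1,\ldots,u_I$ together with $W$, so their number is $m=I+1$, while $W$ carries $K-1$ black leaves. Since $K\geq|C|+2=I+2$, we get $m=I+1\leq K-1$, which is exactly the hypothesis $m\leq k$ of Lemma~\ref{lem:claw} (with $W$ as the distinguished white vertex). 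As it is now Black's turn, Lemma~\ref{lem:claw} yields a first-player win for Black. This is the step in which the precise bound $K\geq|C|+2$ is consumed.

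\textbf{Case White plays $r$.} Here $r$ is adjacent precisely to the white vertices $u_i$, $t_{j,k}$, and $s$, so playing $r$ contracts all of them into one white vertex $R$; no $v_i$ or $w_k$ is merged, since none is a neighbour of $r$. I would then check that every surviving black vertex is adjacent to $R$: each $v_i$ and each $w_k$ was joined to the universal white vertex $s$, which now lies in $R$. The position is therefore the complete bipartite graph on the $I+K$ black vertices $v_i,w_k$ and the single white vertex $R$. Because $I+K\geq 2$, Black (to move) simply plays $R$, collapsing the graph to a single black vertex and winning; alternatively one invokes Lemma~\ref{lem:complete}(b).

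The only genuinely load-bearing part is the bookkeeping of adjacencies after each contraction: verifying in the $w$ case that the unplayed $w$'s become leaves of a common vertex, and in the $r$ case that the universality of $s$ forces every remaining black vertex onto $R$. The former is dispatched cleanly by Lemma~\ref{lem:neighbors}, and the latter rests solely on $s$ being adjacent to all black vertices; once these structural facts are in place, Lemmas~\ref{lem:claw} and~\ref{lem:complete} close both cases, with $K\geq|C|+2$ revealed as exactly the threshold demanded by the $w$ case.
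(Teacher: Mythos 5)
Your proof is correct and follows essentially the same route as the paper's: the same case split between a move at $w_k$ (handled via Lemma~\ref{lem:neighbors} and then Lemma~\ref{lem:claw}, with $K\geq|C|+2$ giving $I+1\leq K-1$) and a move at $r$ (after which Black wins immediately by playing the single remaining white vertex). Your extra bookkeeping in the $r$ case and the alternative appeal to Lemma~\ref{lem:complete}(b) are harmless elaborations of the paper's one-line argument.
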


\begin{proof}
  There are two cases. Case 1: White's move is at the universal vertex
  $r$, then all white vertices will be contracted into a single
  vertex, while at least $w_1,\ldots,w_K$ remain as black
  vertices. Then Black wins on the next move by playing the remaining
  white vertex. Case 2: White's move is at a vertex of type $w$, say
  at $w_k$. Since all of the vertices of types $t$ and $s$ are
  neighbors of $w_k$, they all get contracted into a single white
  vertex, which we may still call $s$. This turns all of the $w_{k'}$
  where $k'\neq k$ into leaves by Lemma~\ref{lem:neighbors}. Hence,
  the resulting graph has at least $K-1$ black leaves attached to the
  single vertex $s$. Also note that there are at most $I+1$ white
  vertices remaining in the graph, namely $u_1,\ldots,u_I$ and $s$. By
  assumption, $K\geq |C|+2$, so $I+1\leq K-1$. Then Black wins by
  Lemma~\ref{lem:claw}.
\end{proof}

\begin{lemma}[Black shenanigans]\label{lem:shenanigan-black}
  Assume $K\geq|C|+2$ and $\Aa\neq\emptyset$.  Suppose it is Black's
  turn in the Paintbucket game on the graph $\G{K}{(C,\Aa)}$. If Black's
  next move is not at a vertex of type $u$, Black loses.
\end{lemma}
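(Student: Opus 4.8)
The plan is to mirror the proof of Lemma~\ref{lem:shenanigan-white}, exchanging the roles of the two players. Black moves at white vertices, and the intended white vertices are those of type $u$; a shenanigan is therefore a move at a vertex of type $t$ or type $s$. I will split into these two cases and show that in each case White can make the last move. The one real difference from the dual situation is that the type-$t$ vertices also carry edges to vertices of type $v$, so the symmetry with Lemma~\ref{lem:shenanigan-white} is not perfect; this is where the hypothesis $\Aa\neq\emptyset$ and a careful vertex count will be needed.

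\emph{Case 1: Black plays at $s$.} Since $s$ is the universal white vertex, it is adjacent to every black vertex, so the move merges all black vertices into a single black vertex $R$. Every white vertex other than $s$ was adjacent to the universal black vertex $r$, hence survives and becomes a leaf attached to $R$; the resulting graph is the complete bipartite graph $K_{1,n}$ with the single black vertex $R$ and $n=|C|+JK$ white vertices. Because $\Aa\neq\emptyset$ we have $J\geq 1$, and $K\geq|C|+2\geq 2$, so $n\geq JK\geq 2>1$. By Lemma~\ref{lem:complete}(c) this position is a win for White no matter who moves, so Black loses.

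\emph{Case 2: Black plays at a vertex $t_{j,k}$ of type $t$.} The black neighbors of $t_{j,k}$ are all of $w_1,\ldots,w_K$, the universal vertex $r$, and exactly those $v_i$ with $c_i\in A_j$; the move merges all of these into a single black vertex $R$. For fixed $j$, every vertex $t_{j,k'}$ has the same neighborhood as $t_{j,k}$ — namely all of $w_1,\ldots,w_K$, the vertex $r$, and the same set $\s{v_i\mid c_i\in A_j}$ — so by Lemma~\ref{lem:neighbors} the remaining $K-1$ vertices $t_{j,k'}$ ($k'\neq k$) become white leaves attached to $R$. It remains to bound the number of black vertices of the new graph: these are $R$ together with the vertices $v_i$ for which $c_i\notin A_j$, so there are at most $1+|C|$ of them. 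Since $K\geq|C|+2$, the number of black vertices is at most $|C|+1\leq K-1$, which is at most the number of white leaves on $R$. As it is now White's turn, the dual of Lemma~\ref{lem:claw} shows that White has a first-player winning strategy, so Black loses.

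The step I expect to be the main obstacle is the leaf analysis in Case 2: unlike the type-$w$ vertices in Lemma~\ref{lem:shenanigan-white}, the type-$t$ vertices are also joined to vertices of type $v$, so I must check that it is precisely the siblings $t_{j,k'}$ sharing the same index $j$ (and hence the same set $\s{v_i\mid c_i\in A_j}$ of type-$v$ neighbors) that become leaves after the contraction, and then verify that the leftover black vertices $v_i$ with $c_i\notin A_j$ do not push the black-vertex count above $K-1$. The bound $K\geq|C|+2$ is exactly what makes this count go through, and $\Aa\neq\emptyset$ is what guarantees that the relevant clusters of leaves exist in the first place.
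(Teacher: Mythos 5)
Your proof is correct and follows essentially the same route as the paper's: the same two cases (Black playing at $s$ or at some $t_{j,k}$), with Lemma~\ref{lem:neighbors} turning the sibling vertices $t_{j,k'}$ into leaves and the dual of Lemma~\ref{lem:claw} finishing Case~2 via the same count $I+1\leq K-1$. The only cosmetic difference is that in Case~1 you invoke Lemma~\ref{lem:complete}(c) on the resulting star $K_{1,n}$, where the paper simply observes that White wins immediately by playing the unique remaining black vertex.
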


\begin{proof}
  By assumption, $\Aa$ is non-empty, so let $j$ be an index of some
  avoider set $A_j\in\Aa$.  There are two cases. Case 1: Black's move
  is at the universal vertex $s$, then all black vertices will be
  contracted into a single vertex, while at least $K$ white vertices
  remain, namely $t_{j,1},\ldots,t_{j,K}$. Then White wins on the next
  move by playing at the remaining black vertex.  This leaves the case
  where Black's move is at a vertex of type $t$, say at
  $t_{j,k}$. Since all of $w_1,\dots,w_K$ and $r$ are neighbors of
  $t_{j,k}$, they will all be contracted into a single vertex, which
  we may still call $r$. Because $t_{j,1},\ldots,t_{j,K}$ all had the
  same neighbors before Black's move, by Lemma~\ref{lem:neighbors},
  the remaining $K-1$ of them turn into leaves which share $r$ as a
  common neighbor.  Moreover, the resulting graph has at most $I+1$
  black vertices remaining, namely $v_1,\ldots,v_I$ and $r$. By
  assumption, $K\geq |C|+2$, so $I+1\leq K-1$. Therefore, White wins
  by the dual of Lemma~\ref{lem:claw}.
\end{proof}

\begin{lemma}[Simulation]\label{lem:simulation}
  Consider a position $(C,\Aa)$ of the avoider-enforcer game and let
  $K$ be a natural number. In the game of Paintbucket on the graph
  $\G{K}{(C,\Aa)}$, if Black moves at $u_i$, the resulting graph is
  isomorphic to $\G{K}{(C',\Aa')}$, where
  \[
  C' = C \setminus \s{c}
  \quad\mbox{and}\quad
  \Aa' = (A_j\setminus\s{c})_{j\in J}
  \]
  Similarly, if White moves at $v_i$, the resulting graph is isomorphic
  to $\G{K}{(C',\Aa')}$, where
  \[
  C'' = C \setminus \s{c}
  \quad\mbox{and}\quad
  \Aa'' = (A_j)_{j\in J,\,c\not\in A_j}.
  \]
  In other words, following Remark~\ref{rem:alternate}, these moves of
  Black and White exactly mirror the corresponding moves at $c_i$ in
  $(C,\Aa)$ by the avoider and enforcer, respectively.
\end{lemma}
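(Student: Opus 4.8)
The plan is to compute each of the two moves explicitly using the merge rule of the bipartite description, and then to exhibit an explicit graph isomorphism onto the claimed graph. The two cases are dual, so I would do Black's move at $u_i$ in full and obtain White's move at $v_i$ by symmetry (swapping the roles of black/white, $r$/$s$, $u$/$v$, and avoider/enforcer), though below I also spell out the White case since it is where the real content sits.

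\textbf{Black moves at $u_i$.} First I would record that the white vertex $u_i$ has exactly two neighbors, namely $r$ and $v_i$, both black: the only edges of $E$ incident to $u_i$ are $(r,u_i)$ and $(v_i,u_i)$. Applying the merge rule, Black recolors $u_i$ black, deletes $r$ and $v_i$, and attaches $u_i$ to every white neighbor of $r$ and of $v_i$. The white neighbors of $r$ are all the $u_{i'}$, all the $t_{j,k}$, and $s$; the white neighbors of $v_i$ are $s$, the $t_{j,k}$ with $c_i\in A_j$, and $u_i$ itself (now the merged vertex). Hence the merged vertex, call it $\rho$, ends up adjacent to exactly $\s{u_{i'}\mid i'\neq i}$, all $t_{j,k}$, and $s$, while every other vertex and edge is untouched. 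I would then check that sending $\rho$ to the universal black vertex of $\G{K}{(C',\Aa')}$ and fixing all other vertices is a graph isomorphism: $\rho$ has precisely the adjacencies required of the new universal vertex $r$, and the surviving $v_{i'}$--$t_{j,k}$ edges are exactly those with $c_{i'}\in A_j = A_j\setminus\s{c_i}$ (automatic since $i'\neq i$).

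\textbf{White moves at $v_i$.} Dually, the black vertex $v_i$ has white neighbors $u_i$, $s$, and every $t_{j,k}$ with $c_i\in A_j$. White recolors $v_i$ white, deletes these neighbors, and attaches $v_i$ to all their black neighbors. The point I expect to be the crux of the lemma is that deleting $v_i$ deletes the \emph{entire} cluster $\s{t_{j,k}\mid k}$ for every $j$ with $c_i\in A_j$, because $v_i$ is adjacent to all $K$ of them; this is precisely the enforcer's deletion of every avoider set containing $c_i$, and it is what makes $\Aa''=(A_j)_{j,\,c_i\notin A_j}$ come out right. Collecting the black neighbors of $u_i$ (just $r$), of $s$ (all $v_{i'}$, all $w_k$, and $r$), and of the deleted $t_{j,k}$ (again $r$, all $w_k$, and the $v_{i'}$ with $c_{i'}\in A_j$), the merged vertex $\sigma$ becomes adjacent to exactly $r$, all $w_k$, and $\s{v_{i'}\mid i'\neq i}$, which are the adjacencies of the universal white vertex of $\G{K}{(C'',\Aa'')}$.

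To finish, I would write down the White isomorphism explicitly: send $\sigma$ to the new universal white vertex $s$, keep $r$, the $w_k$, the surviving pairs $(v_{i'},u_{i'})$ with $i'\neq i$, and the surviving clusters $t_{j,k}$ (those with $c_i\notin A_j$), and verify edge by edge that the adjacencies match those prescribed by $\G{K}{(C'',\Aa'')}$; in particular the surviving $v_{i'}$--$t_{j,k}$ edges are exactly those with $c_{i'}\in A_j$ ranging over the surviving index set $\s{j\mid c_i\notin A_j}$. The whole argument is routine bookkeeping once the neighbor sets are computed; the only points needing care are that parallel edges (e.g.\ $\sigma$ receiving an edge to $r$ from several of its deleted neighbors) collapse harmlessly in the simple graph, and that $u_i$ is correctly removed so that no stray $v$--$u$ pair survives.
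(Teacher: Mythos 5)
Your proposal is correct and follows essentially the same route as the paper's proof: compute the neighbor sets of $u_i$ (just $r$ and $v_i$) and of $v_i$ (namely $u_i$, $s$, and the full clusters $t_{j,k}$ with $c_i\in A_j$), observe that merging into the universal vertex contributes no new adjacencies, and conclude that each move amounts to deleting exactly the vertices corresponding to the avoider's or enforcer's update of $(C,\Aa)$. The only difference is presentational --- you carry the merged vertex under a new name ($\rho$ or $\sigma$) and exhibit the isomorphism explicitly, whereas the paper identifies it with the surviving universal vertex outright --- and you correctly flag the crux, that White's move wipes out entire $t$-clusters, which is what makes $\Aa''=(A_j)_{j,\,c_i\notin A_j}$ come out right.
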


\begin{proof}
  First, consider the Black move at $u_i$. The vertex $u_i$ has
  exactly two neighbors, namely $r$ and $v_i$. By the definition of
  Paintbucket on bipartite graphs, the result of the move is to delete
  $u_i$ and merge $v_i$ into $r$. Note that, since $r$ is a universal
  vertex, all of the neighbors of $v_i$ are already neighbors of $r$,
  so $r$ gains no new neighbors. The move is effectively equivalent to
  deleting the pair of vertices $u_i,v_i$ from the graph, along with
  all of their incident edges. This is exactly equivalent to removing
  the cell $c$ from $C$ and all avoider sets.

  Second, consider the White move at $v_i$. The vertex $v_i$ has
  several neighbors, namely $u_i$, $s$, and for each avoider set $A_j$
  that contains $c_i$, all of the $t_{j,k}$. The effect of moving
  at $v_i$ is to delete $v_i$ and merge all of its neighbors into a
  single vertex, which we may still call $s$. Since $s$ is universal,
  it already has all black vertices as neighbors, and therefore gains
  no new neighbors. Thus, White's move is equivalent to simply
  deleting $v_i$, $u_i$, and all the clusters $\s{t_{j,k}}$
  corresponding to avoider sets $A_j$ that contain $c_i$. This is
  exactly equivalent to removing the cell $c$ from $C$ and removing all
  avoider sets containing $c$ from $\Aa$.
\end{proof}

\subsection{Proof of the main result}

The PSPACE-completeness of Paintbucket is a consequence of the
following proposition, which relates the winner of $\G{K}{(C,\Aa)}$ to
that of $(C,\Aa)$.

\begin{proposition}\label{pro:main}
  Consider a position $(C,\Aa)$ of the avoider-enforcer game, and
  assume that $K\geq |C|+2$.
  \begin{enumerate}\alphalabels
  \item If $|C|$ is even, Black has a first-player winning strategy in
    the Paintbucket game $\G{K}{(C,\Aa)}$ if and only if the
    avoider has a first-player winning strategy in $(C,\Aa)$.
  \item If $|C|$ is odd, Black has a second-player winning strategy in
    the Paintbucket game $\G{K}{(C,\Aa)}$ if and only if the
    avoider has a second-player winning strategy in $(C,\Aa)$.
  \end{enumerate}
\end{proposition}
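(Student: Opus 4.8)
The plan is to prove both parts at once by induction on $|C|$, establishing the sharper pair of statements (with $K\geq|C|+2$ assumed throughout): \textbf{(A)} if $|C|$ is even, then Black wins $\G{K}{(C,\Aa)}$ with Black to move iff the avoider wins $(C,\Aa)$ with the avoider to move; and \textbf{(B)} if $|C|$ is odd, then Black wins $\G{K}{(C,\Aa)}$ with White to move iff the avoider wins $(C,\Aa)$ with the enforcer to move. The organizing dictionary, supplied by Lemma~\ref{lem:simulation}, is that \emph{Black plays the role of the avoider} (a Black move at $u_i$ mirrors an avoider move at $c_i$) and \emph{White plays the role of the enforcer} (a White move at $v_i$ mirrors an enforcer move at $c_i$); in particular ``Black to move'' matches ``avoider to move''. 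Parts (a) and (b) of the proposition are then exactly (A) and (B), since the first player is the one to move. The crucial bookkeeping remark, which forces the even/odd split, is that each intended move decreases $|C|$ by one and flips both the parity and whose turn it is, so that starting from (A) or from (B) the terminal stage $C=\emptyset$ is always reached \emph{with Black to move}.

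Before the induction I would settle two terminal situations. First, whenever $\Aa=\emptyset$ (an avoider win), the vertices $w_1,\ldots,w_K$ are black leaves attached to the single white vertex $s$, while the total number of white vertices is $|C|+1<K$; Lemma~\ref{lem:claw} then yields a Black win regardless of who moves, matching the avoider win. Second, when $C=\emptyset$ but $\Aa\neq\emptyset$ (an enforcer win), every avoider set is empty and one checks directly that $\G{K}{(\emptyset,\Aa)}$ is the complete bipartite graph $K_{K+1,\,JK+1}$ with both sides larger than $1$; by Lemma~\ref{lem:complete}(a) it is a second-player win, so since it is Black to move, Black loses, matching the enforcer win. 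This is the base case of (A) (as $|C|=0$ is even), and it is precisely here that Black having the move is essential: with White to move, Lemma~\ref{lem:complete}(a) would instead hand Black the win and the correspondence would fail, which is exactly why the proposition pairs even with first-player and odd with second-player.

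For the inductive step, assume $\Aa\neq\emptyset$ and $C\neq\emptyset$ and use the shenanigan lemmas to force intended play, then appeal to the induction hypothesis. In case (A) (Black to move), Lemma~\ref{lem:shenanigan-black} makes every non-$u$ move lose for Black, so Black wins iff some move at a $u_i$ wins; by Lemma~\ref{lem:simulation} that move produces $\G{K}{(C\setminus\s{c_i},(A_j\setminus\s{c_i})_{j})}$ with White to move and $|C|-1$ odd, to which hypothesis (B) applies, giving that Black wins iff the avoider has \emph{some} winning move $c_i$, i.e.\ iff the avoider wins with the avoider to move. In case (B) (White to move), Lemma~\ref{lem:shenanigan-white} makes every non-$v$ move lose for White, so Black wins iff \emph{every} White move at a $v_i$ leaves Black winning; by Lemma~\ref{lem:simulation} such a move produces $\G{K}{(C\setminus\s{c_i},(A_j)_{j,\,c_i\notin A_j})}$ with Black to move and $|C|-1$ even, to which hypothesis (A) applies, giving that Black wins iff the avoider survives \emph{every} enforcer move, i.e.\ iff the avoider wins with the enforcer to move. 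The quantifiers ($\exists$ for the player to move, $\forall$ for their opponent) match on both sides via Remark~\ref{rem:alternate}, the hypothesis $K\geq|C|+2$ is inherited because $|C|$ only decreases, and the two cases invoke each other at the next lower value of $|C|$, so the induction closes on the base case (A) at $|C|=0$.

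I expect the main obstacle to be the parity-and-turn bookkeeping rather than the forcing itself: one must verify that the terminal position is always reached with Black to move under the stated first/second-player pairing, and recognize that the enforcer-win base case is a \emph{second-player} win (Lemma~\ref{lem:complete}(a)), so that the correspondence selects exactly the intended loser. Once the shenanigan lemmas (Lemmas~\ref{lem:shenanigan-white} and~\ref{lem:shenanigan-black}) and the simulation lemma (Lemma~\ref{lem:simulation}) are in hand, the reduction of each move to the corresponding avoider–enforcer move is routine.
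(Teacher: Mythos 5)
Your proposal is correct and follows essentially the same route as the paper: a simultaneous induction on $|C|$ with the cases $\Aa=\emptyset$ (Black wins outright via Lemma~\ref{lem:claw}), $C=\emptyset$ with $\Aa\neq\emptyset$ (Black to move loses), and an inductive step in which Lemmas~\ref{lem:shenanigan-black} and~\ref{lem:shenanigan-white} force intended play and Lemma~\ref{lem:simulation} transfers the position, with the parity and turn bookkeeping exactly as you describe. The only (harmless) deviation is your base case $C=\emptyset$, where you identify $\G{K}{(\emptyset,\Aa)}$ with the complete bipartite graph $K_{K+1,\,JK+1}$ and invoke Lemma~\ref{lem:complete}(a), whereas the paper argues more directly that, there being no vertices of type $u$, every Black move is a shenanigan and Black loses by Lemma~\ref{lem:shenanigan-black}.
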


\begin{proof}
  We prove properties (a) and (b) by simultaneous induction on $|C|$.
  We consider three cases. See also Figure~\ref{fig:simulation}.

\begin{figure}
  \[
  \begin{tikzpicture}[on grid, node distance=2cm and 4cm]
    \node (aegame) {Avoider-enforcer game:};
    \node (paintbucket) [right=of aegame] {Paintbucket:};
    \node (ca) [blue,yshift=1cm,below=of aegame] {$(C,\Aa)$};
    \node (gca) [blue,yshift=1cm,below=of paintbucket] {$\G{K}{(C,\Aa)}$};
    \node (ca') [blue,below=of ca] {$(C',\Aa')$};
    \node (gca') [blue,below=of gca] {$\G{K}{(C',\Aa')}$};
    \node (ca'') [blue,below=of ca'] {$(C'',\Aa'')$};
    \node (gca'') [blue,below=of gca'] {$\G{K}{(C'',\Aa'')}$};
    \node [left=of ca] {$|C|$ is even:};
    \node [left=of ca'] {$|C'|$ is odd:};
    \node (c''iseven) [left=of ca''] {$|C''|$ is even:};
    \node (blackloses) [red,xshift=-15mm,yshift=5mm,right=of gca'] {Black loses};
    \node (whiteloses) [red,xshift=-15mm,yshift=5mm,right=of gca''] {White loses};
    \node (avoiderloses) [blue,yshift=-8mm,xshift=-1cm,below=of ca''] {\begin{tabular}{c}Avoider\\loses\end{tabular}};
    \node (avoiderwins) [blue,yshift=-8mm,xshift=1cm,below=of ca''] {\begin{tabular}{c}Avoider\\wins\end{tabular}};
    \node (blackloses2) [blue,yshift=-8mm,xshift=-1cm,below=of gca''] {\begin{tabular}{c}Black\\loses\end{tabular}};
    \node (blackwins2) [blue,yshift=-8mm,xshift=1cm,below=of gca''] {\begin{tabular}{c}Black\\wins\end{tabular}};
    \node [below=of c''iseven,yshift=-8mm] {Base case:};
    \node (cadot) [yshift=1cm,below=of ca'',circle] {};
    \node (gcadot) [yshift=1cm,below=of gca'',circle] {};
    \draw[->,blue] (ca) -- node [left] {\small\sl\begin{tabular}{r}Avoider moves\\ at $c_i$\end{tabular}} (ca');
    \draw[->,blue] (gca) -- node [left] {\small\sl\begin{tabular}{r}Black moves\\ at $u_i$\end{tabular}} (gca');
    \draw[->,red] (gca) -- node [right,xshift=-1mm] {\small\sl\begin{tabular}{@{}l}Black moves\\ ~~~~~~elsewhere\end{tabular}} (blackloses);
    \draw[->,blue] (ca') -- node [left] {\small\sl\begin{tabular}{r}Enforcer moves\\ at $c_{i'}$\end{tabular}} (ca'');
    \draw[->,blue] (gca') -- node [left] {\small\sl\begin{tabular}{r}White moves\\ at $v_{i'}$\end{tabular}} (gca'');
    \draw[->,red] (gca') -- node [right,xshift=-1mm] {\small\sl\begin{tabular}{@{}l}White moves\\ ~~~~~~elsewhere\end{tabular}} (whiteloses);
    \path (ca) -- node {$\sim$} (gca);
    \path (ca') -- node {$\sim$} (gca');
    \path (ca'') -- node {$\sim$} (gca'');
    \path (ca'') -- node {$\vdots$} (cadot);
    \path (gca'') -- node {$\vdots$} (gcadot);
    \draw[->,blue] (cadot) -- node [left] {$\Aa=\emptyset$} (avoiderloses);
    \draw[->,blue] (cadot) -- node [right] {\begin{tabular}{@{}l}$\Aa\neq\emptyset$,\\~~$C=\emptyset$\end{tabular}} (avoiderwins);
    \draw[->,blue] (gcadot) -- node [left] {$\Aa=\emptyset$} (blackloses2);
    \draw[->,blue] (gcadot) -- node [right] {\begin{tabular}{@{}l}$\Aa\neq\emptyset$,\\~~$C=\emptyset$\end{tabular}}(blackwins2);
  \end{tikzpicture}
  \]  
  \caption{Schematic representation of the proof of Proposition~\ref{pro:main}}
  \label{fig:simulation}
\end{figure}

  Case 1. $\Aa=\emptyset$. Since there are no avoider sets,
  the avoider wins the avoider-enforcer game no matter how the players
  play. We must show that Black has a winning strategy in the
  Paintbucket game on $\G{K}{(C,\Aa)}$. Since $\Aa=\emptyset$, there
  are no vertices of type $t$. Therefore, $w_1,\ldots,w_K$ have $s$ as
  their only neighbor, so there are at least $K$ black leaves
  attached to a single vertex. Also, the graph has at most $I+1$ white
  vertices, namely $u_1,\ldots,u_I$ and $s$. We assumed $K\geq
  |C|+2=I+2$, therefore $I+1<K$. Then Black wins by
  Lemma~\ref{lem:claw}, no matter whose turn it is.

  Case 2. $\Aa\neq\emptyset$ and $C=\emptyset$. Then (b) is vacuously
  true. To prove (a), first note that the avoider-enforcer game is
  already finished and has been won by the enforcer (since there is an
  avoider set, which is necessarily empty, and of which the avoider
  has therefore claimed all members). So we must show that Black loses
  the game $\G{K}{(C,\Aa)}$, moving first. But since $C=\emptyset$,
  there are no vertices of types $u$. Black loses by
  Lemma~\ref{lem:shenanigan-black}.

  Case 3. $|C|>0$ and $|\Aa|>0$. To prove (a), assume $|C|$ is
  even. By Lemma~\ref{lem:shenanigan-black}, Black has no possible
  winning moves in the Paintbucket game except at vertices of type
  $u$. We claim that $u_i$ is a winning move for Black in the
  Paintbucket game $\G{K}{(C,\Aa)}$ if and only if $c_i$ is a
  winning move for the avoider in the avoider-enforcer game $(C,\Aa)$.
  Indeed, this follows from the simulation lemma and the induction
  hypothesis. Namely, if the avoider's move at $c_i$ results in the
  position $(C',\Aa')$, then Black's move at $u_i$ results in a
  position that is isomorphic to $\G{K}{(C',\Aa')}$. The move at
  $u_i$ is winning if and only if Black has a second-player winning
  strategy in $\G{K}{(C',\Aa')}$, which, by the induction hypothesis
  and the fact that $|C'|$ is odd, is the case if and only if the
  avoider has a second-player winning strategy in $(C',\Aa')$, which
  is the case if and only if the move at $c_i$ was winning.
  The proof of (b) is analogous but dual.  
\end{proof}

We can now prove the main theorem, i.e., the PSPACE-completeness of
Paintbucket on graphs.

\begin{proof}[Proof of Theorem~\ref{thm:main}]
  It is easy to see that Paintbucket is in PSPACE. To show
  PSPACE-hardness, consider any position $(C,\Aa)$ of the
  avoider-enforcer game. As noted in Remark~\ref{rem:even}, we can
  assume without loss of generality that $|C|$ is even and that the
  player to move is the avoider. We construct a position of
  Paintbucket as follows. Let $K=|C|+2$ and consider Paintbucket on
  the graph $\G{K}{(C,\Aa)}$. Note that the size of this graph is
  polynomial in the size of the avoider-enforcer instance. Then by
  Proposition~\ref{pro:main}, any algorithm that can determine the
  winner of the game $\G{K}{(C,\Aa)}$ can also determine the winner of
  $(C,\Aa)$; it follows that the decision problem for Paintbucket is
  at least as hard as the decision problem for the avoider-enforcer
  game. Therefore, it is PSPACE-hard.
\end{proof}

\section{Conclusion}

We showed that the game of Paintbucket on graphs is
PSPACE-complete. The obvious open question is whether the original
version of Paintbucket, played on a square grid, is also
PSPACE-complete. Our current method does not shed any light on this
question, but it would be fun to figure it out in future work.

\bibliographystyle{abbrv}
\bibliography{paintbucket}

\end{document}